\newcommand{\C}{\mathbb{C}}
\newcommand{\R}{\mathbb{R}}
\newcommand{\Q}{\mathbb{Q}}
\newcommand{\To}{\rightarrow}
\theoremstyle{plain}
\newtheorem{Thm}{Theorem}
\newtheorem{Cor}{Corollary}
\newtheorem{Prop}[Cor]{Proposition}
\newtheorem{Conj*}{Conjecture}
\theoremstyle{remark}
\newcommand{\J}{{J}acobian\ }
\newcommand{\p}{polynomial\ }
\newcommand{\pk}{{P}inchuk\ }
\newcommand{\nz}{nonzero\ }
\newcommand{\sa}{semi-algebraic\ }
\begin{document}
\title{On the rational real {J}acobian conjecture}
\author{L. Andrew Campbell} 
\address{908 Fire Dance Lane \\
Palm Desert CA 92211 \\ USA}
\email{lacamp@alum.mit.edu}
\subjclass[2010]{
Primary 14R15; Secondary 14E05 14P10}
\keywords{real rational map, {J}acobian conjecture}
\begin{abstract}
Jacobian conjectures (that nonsingular implies a global inverse) 
for rational everywhere defined maps of $\R^n$ to itself
are considered, with no requirement for a constant Jacobian 
determinant or a rational inverse. 
The birational case is proved and the Galois case clarified. 
Two known special cases of the Strong Real Jacobian Conjecture 
(SRJC) are generalized to the rational map context. 
 For an invertible map, the associated extension of 
rational function fields must be of odd degree and must have
no nontrivial automorphisms. That disqualifies the Pinchuk counter
 examples to the SRJC as candidates for invertibility. 
\end{abstract}

\maketitle

\section{Introduction and summary of results}\label{intro}

The \J Conjecture (JC) \cite{BCW82,ArnoBook} 
asserts that a polynomial map $F: k^n \To k^n$,
where $k$ is a field of characteristic zero, has a polynomial inverse if  
it is a Keller map \cite{Keller}, which means that its \J determinant, $j(F)$, is a \nz element of $k$.
The JC is still not settled for any $n > 1$ and any specific field $k$ of characteristic zero.

For $k=\R$,  the Strong Real \J Conjecture (SRJC), 
asserts that a polynomial map $F: \R^n \To \R^n$,
has a real analytic  inverse if it is nonsingular, meaning that
$j(F)$, whether constant or not, 
vanishes nowhere on $\R^n$. 
However, Sergey {P}inchuk exhibited a family 
of counterexamples for $n=2$ \cite{Pinchuk}, 
 so the SRJC holds only in 
special cases.

The  Rational Real \J Conjecture (RRJC) is considered here. 
It is the extension of the SRJC to everywhere defined rational 
maps, as well as \p ones. 
Everywhere defined means that each component of the map 
can be expressed as the quotient of two polynomials 
with a nowhere vanishing denominator. 
That rules out rational functions such as $(x^4+y^4)/(x^2+y^2)$, 
which is not defined at the origin, even though it has a unique 
continuous extension to all of $\R^2$.  
That requirement is crucial, as 
$F= (x^2y^6+2xy^2, xy^3+1/y)$ is Keller and 
maps $(1,1)$ and $(-3,-1)$ to the same point 
\cite{Vitushkin}. 
Assume  
$F: \R^n \To \R^n$ is such a map and is nonsingular. 
Then all its fibers are finite  of size at most the degree of the associated 
finite algebraic extension of rational function fields. 
If $F$ also has a (necessarily real analytic) inverse, then the 
function field extension is of odd degree and has a trivial 
automorphism group. 
The extension degree and the maximum fiber size are of the 
same parity. 
If odd maximum fiber size is added as an additional 
hypothesis to the RRJC or SRJC, it disqualifies the 
\pk counterexamples, for all of which that size is $2$ 
\cite{aspc}. 
If the extension degree is $1$ (the birational case), then
$F$ has an inverse that is also  an everywhere defined
birational nonsingular map. If the extension is {G}alois,
then $F$ has an inverse if, and only if, $F$ is birational.

If the
automorphism group condition is added as an
additional  hypothesis, the RRJC and SRJC are  
 true in the {G}alois case.  
Thus if both necessary conditions are assumed, 
the resulting modified RRJC and SRJC 
conjectures are true in the 
birational and {G}alois cases and 
have no obvious counterexamples.

Finally, two  known special cases of the SRJC 
 are generalized to the RRJC context. 
They show that $F$ is invertible if $A(F)$, the set of points in 
the codomain over which $F$ is not locally a trivial fibration, 
 either  is of codimension  greater than $2$, or  does 
not intersect the image of $F$.

\section{Basic properties}\label{basic}

Both the \J hypothesis and the conclusion of the RRJC can be
restated in various equivalent ways. Principally, the former is equivalent to
the assertion that $F$ is locally diffeomorphic or locally real bianalytic, and the latter to the assertion that $F$ is injective 
or bijective or a homeomorphism or a diffeomorphism. 
These are all obvious, except for the key result that injectivity, also called univalence, implies bijectivity
for maps of $\R^n$ to itself that are
polynomial or, more generally, rational and defined on
all of $\R^n$ \cite{InjectiveReal}.
That result does not generalize to semi-algebraic maps of $\R^n$ to itself \cite{SurjectiveSemialgebraic}. 
Clearly any global univalence theorems \cite{GlobU}
for local diffeomorphisms can yield special cases of the conjecture. 
Properness suffices, and related topological considerations 
play a role below. 
But the focus of this article is on results or conjectures that require
the polynomial or rational character of a map
and involve properties of the associated 
extension of rational function fields.

The extension of function fields exists, and is algebraic of 
finite degree, for any dominant rational $F: \R^n \To \R^n$, 
whether defined everywhere or not. $F$ is dominant if, 
and only if, $j(F)$ is not identically zero. 
The extension is the inclusion of the subfield generated 
over $\R$ by 
the (algebraically independent) components of $F$ in the
rational function field on the domain of $F$,
and will be written as $\R(F) \subseteq \R(X)$ 
or $\R(X)/\R(F)$. 
The degree $d$ of the extension is called the extension degree 
of $F$. If $F$ is generically $N$-to-one for a positive 
integer $N$, then $N$ is called the geometric degree of $F$. 
In general, let  $t \in \R(X)$ be a primitive element 
for the extension, meaning that $\R(F)(t)=\R(X)$. 
For generic $y$ in the codomain, inverse images $x$ of $y$ correspond bijectively to real roots $r=t(x)$ at $y$ 
of the monic minimal polynomial of $t$ over $\R(F)$. 
So a generic fiber of $F$ is finite and either empty 
or of positive size at most $d$, 
but $F$  need not have a geometric degree.

By definition, an automorphism of the extension is a 
field automorphism of $\R(X)$ that fixes every element 
of $\R(F)$.

\begin{Prop}\label{needs}
If the geometric degree of $F$ is $1$, then the extension 
has odd degree and trivial automorphism group.
\end{Prop}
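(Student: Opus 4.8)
The plan is to read off both conclusions from the monic minimal polynomial $m(T)$ of a primitive element $t$ over $\R(F)$, together with the dictionary, already recorded above, between the points of a generic fiber and the real roots of $m$. Write $d=[\R(X):\R(F)]$. Since $\R$ has characteristic $0$ the extension is separable, so $m$ has $d$ distinct roots in an algebraic closure; consequently, for generic $y$, the specialized polynomial $m_y(T)\in\R[T]$ has degree $d$ and $d$ distinct roots (its discriminant is a nonzero element of $\R(F)$, hence nonvanishing for generic $y$).

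For the parity statement I would fix a generic point $y$ of the image of $F$, which is a full-dimensional set because $F$ is dominant (its \J determinant is not identically zero). By the quoted correspondence the real roots of $m_y$ are in bijection with the points of the fiber $F^{-1}(y)$, and geometric degree $1$ forces this fiber to be a single point for generic $y$. Hence $m_y$ has exactly one real root; the remaining $d-1$ roots are non-real and, $m_y$ having real coefficients, occur in complex-conjugate pairs. Therefore $d-1$ is even and $d$ is odd.

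For the automorphism statement I would pass from algebra to geometry. An automorphism $\sigma$ of the extension is an $\R$-automorphism of $\R(X)$, since it fixes $\R\subseteq\R(F)$, and so corresponds to a birational self-map $g$ of affine $n$-space over $\R$ with pullback $g^{*}=\sigma$; the requirement that $\sigma$ fix every element of $\R(F)$ is exactly the condition $F\circ g=F$ as rational maps, so $g$ carries each fiber of $F$ into itself. Because $\R$ is infinite the real points are Zariski dense, so $F\circ g=F$ holds on a dense set of real $x$, and on the further dense set where $F^{-1}(F(x))=\{x\}$ (geometric degree $1$) one obtains $g(x)=x$. A rational self-map agreeing with the identity on a Zariski-dense set is the identity, whence $g=\mathrm{id}$ and $\sigma$ is trivial.

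The step I expect to be the main obstacle is the transfer between the real, point-counting picture and the field-theoretic one, since $\R$ is not algebraically closed: I must guarantee that for generic $y$ the real fiber size is genuinely the geometric degree rather than a count of some complex roots, that such $y$ may be taken inside the image, and that the automorphism/birational-map dictionary together with Zariski density of the real points really forces $g=\mathrm{id}$. Once that dictionary is secured, the parity conclusion rests only on the pairing of non-real roots and the triviality of the automorphism group only on generic injectivity.
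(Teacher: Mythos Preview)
Your proposal is correct and follows essentially the same route as the paper: for the parity you specialize the minimal polynomial of a primitive element at a generic $y$, use the fiber/real-roots dictionary to get exactly one real root, and pair the remaining non-real roots; for the automorphism group you realize $\sigma$ geometrically as a rational self-map $g$ with $F\circ g=F$, use geometric degree $1$ to force $g(x)=x$ on a Zariski-dense set, and conclude $g=\mathrm{id}$. The paper's proof is terser but uses exactly these two ideas, so there is nothing substantively new to compare.
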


\begin{proof}
The nonreal roots occur in complex conjugate pairs, and the 
degree of the monic minimal polynomial for a primitive 
element is $d$. If $G: \R^n \To \R^n$ is the geometric 
realization of an automorphism of $\R(X)$ as a 
rational map and every element of $\R(F)$ is fixed by the 
automorphism, then $F \circ G = F$.  
For a generic $x$, $G$ is defined at $x$, and $F$ 
is defined and locally diffeomorphic at both $x$ and 
$x'=G(x)$.  
Since the geometric degree of $F$ is $1$, $G$ is the 
identity on an open set and therefore, because it is rational, 
the identity map. So the automorphism is also the identity. 
\end{proof}

A map $F: \R^n \To \R^n$ will be called a rational 
nonsingular (nondegenerate) map if it is an everywhere 
defined rational map and $j(F)$ vanishes nowhere 
(resp., does not vanish identically).  
In either case, both of the Proposition \ref{needs}
conclusions become necessary conditions for 
the existence of an inverse.  
The Pinchuk counterexamples \cite{Pinchuk} to the SRJC 
(and hence to the RRJC) are nonsingular \p maps of 
$\R^2$ to $\R^2$ with no inverse. 
All these \pk maps have the same nonconstant, everywhere positive 
\J determinant, 
geometric degree $2$, no point with more 
than $2$ inverse images, exactly $2$ points omitted 
in the image plane, and the 
same extension of degree $6$ 
with trivial automorphism group 
\cite{aspc,PMFF}.

All three conjectures 
discussed are true in the 
 dimension $n=1$ case $f: \R \To \R$. 
 In the JC case, $f$ is of degree $1$. In the SRJC 
case, $f$ is proper, since any nonconstant polynomial becomes infinite 
when its argument does. 
In the RRJC case, $f$ is monotone increasing or decreasing, 
hence injective, thus surjective, so unbounded above and 
below, and therefore proper.

In the RRJC context,
the distinction between \nz constant and nowhere vanishing \J 
determinants is not as critical as it may seem. 
If $F:\R^n \To \R^n$ satisfies the hypotheses,
let $x \in \R^n, z \in \R$ and define 
$F^+:\R^{n+1} \To \R^{n+1}$
by $F^+(x,z)=(F(x),z/(j(F)(x))))$.
Then $F^+$ also  satisfies the hypotheses, $j(F^+)=1$, and 
$F^+$ is injective if, and only if, $F$ is injective. 
As pointed out  in 
\cite{RealJC+SamuelsonMaps}, choosing 
{P}inchuk maps for $F$ yields Keller 
counterexamples to the RRJC in dimension $n=3$.

A Samuelson map is a map with a square \J matrix, 
all of whose leading principal minors, including 
its determinant, vanish nowhere. 
A rational Samuelson map defined on all 
of $\R^n$  has an inverse \cite{RationalSamuelson}, 
which is necessarily Nash  (\sa and real analytic),
but is rational if, and only if, the 
function field extension is birational
(cf. section 3).
The well known real analytic example $(e^x-y^2+3,4y e^x-y^3)$ in 
\cite{Gale-Nikaido} shows that a 
Samuelson map need not be globally
injective (consider $(0,2)$ and $(0,-2)$).
The variation $F(x,y)=(h-y^2+3,4yh-y^3)$ in
\cite{RealJC+SamuelsonMaps}, 
where $h$ is the function 
$h(x)=x+\sqrt{1+x^2}$ (positive 
square root intended) 
has the same properties and is Nash as well. 
So does $F^+$, which is also Keller.

Let $F: \R^n \To \R^n$ be a rational nonsingular map.
It is a local diffeomorphism, hence an open map.
Let $x \in \R^n$ and $y= F(x) \in \R^n$ and define $m(x)$
to be the number of inverse images of $y$ under $F$, 
potentially allowing $+\infty$ as a possible value. 
Since $F$ is open, $m(x')\ge m(x)$ for $x'\in \R^n$ in a neighborhood 
of $x$. So if $A \subseteq \R^n$, the maximum value of $m$ 
on $A$ is also the maximum value of $m$ on its topological 
closure $\bar{A}$. 
So all fibers of $F$, not just generic ones, are finite of 
size at most $d$, where $d$ is the extension degree of $F$. 
The fiber size maximum, $N$, is attained on an open 
subset of the codomain, which must contain a point 
where  $N$ is the number of real roots of a polynomial 
of degree $d$ with real coefficients. 
Thus $N$ and $d$ have the same parity. 
Note that if $N$ and $d$ are odd, then a generic fiber 
is nonempty because a real polynomial of odd degree 
has at least one real root, and so $F(\R^n)$ is a connected 
dense open \sa  subset of the codomain. 
All subsets of $\R^n$ that can be described in the first order logic 
of ordered fields are semi-algebraic. 
The description can include real constant symbols 
(coefficients, values, etc.) and quantification over real 
variables (but not over subsets, functions or natural numbers); 
results for any dimension $n>0$ and involving 
polynomials of arbitrary degrees follow from 
schemas specifying first order descriptions for any fixed 
choice of the natural number parameters. 
As a first application of that principle,  the $N$ subsets of the 
domain $\R^n$ on which $m(x)$ has a 
specified numeric value in the range $1,\ldots,N$,
and the  $N+1$ subsets of the codomain $\R^n$ on which $y$ has a 
specified number of inverse images in the range $0,\ldots,N$, 
are all semi-algebraic. 
By definition, $F$ is proper at a point $y$ in its codomain 
if $y$ has an open neighborhood $U$, such that any compact 
subset of $U$ has a compact inverse image under $F$. 
The set of points $y$ in the codomain at which $F$ is proper is 
readily verified to be the open set of points at which the number of 
inverse images of $y$ is locally constant. 
That set 
contains all points with $N$ inverse images and has an $\epsilon$-ball
first order description.
Its complement $A(F)$, the asymptotic variety of $F$, is therefore 
closed semi-algebraic and the inclusion $A(F)\subset \R^n$ is strict. 
$A(F)$ Is the union for $i=0,\ldots,N-1$ of the  semi-algebraic sets consisting of points $y$ in the codomain at which $F$ 
is not proper and for which $y$ has exactly $i$ inverse images. 
At an interior point $y$ of one of these sets $F$ would be proper, 
contradicting $y \in A(F)$. Thus each such set has empty interior, 
hence is of dimension less than $n$. 
Consequently $\dim A(F) < n$. 
It follows that the complement of $A(F)$ is a finite union of disjoint 
connected open semi-algebraic subsets of $\R^n$ on each of which the number of 
inverse images of points is a constant, with possibly differing 
constants for different connected components.
If   $U$ is  any such  connected component that intersects 
$F(\R^n)$,
then $F^{-1}(U)$ is 
nonempty, open and semi-algebraic. Let  $V$ be one of its 
finitely many connected components. Since $V$ is an open and closed subset of $F^{-1}(U)$, the map $V \To U$ induced 
by $F$ is a proper local homeomorphism of connected, locally compact, and locally arcwise connected spaces and hence it is a covering map. Such a map is surjective, so all of $U$ is 
contained in $F(\R^n)$. 
$V$ must be exactly one of the finitely many connected components 
of the open semi-algebraic set $\R^n \setminus F^{-1}(A(F))$, 
since it is closed in that subset as one element of a finite cover by 
disjoint total spaces of covering maps. 
Speaking informally, this presents a view of $F$ as a finite 
collection of $n$-dimensional covering maps, of possibly 
different degrees, glued together along semi-algebraic sets of 
positive codimension to form $\R^n$ at the total space level, 
whose base spaces, which may sometimes coincide for different 
total spaces, are similarly glued together to form $F(\R^n)$. 
$F(\R^n) \cap A(F)$ is in general neither empty nor all of $A(F)$, 
a behavior exhibited 
by any {P}inchuk map $F$, 
since then $A(F)$ is a \p curve and 
exactly two of its points 
are not in the image of $F$.

\begin{Prop}
If $F: \R^n \To \R^n$ is a rational nonsingular map and 
$F$ is generically injective, then $F$ is invertible and its 
inverse is a nonsingular real analytic map defined on all 
of $\R^n$. 
\end{Prop}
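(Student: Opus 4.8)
The plan is to establish global injectivity of $F$ first, then use the univalence theorem of Section~\ref{basic} to upgrade injectivity to bijectivity, and finally read off the analyticity and nonsingularity of the inverse from the inverse function theorem. The first and essential step is to prove that the maximum fiber size $N$ equals $1$. The key input, already recorded above, is that $F$ is an open map, so the fiber-counting function is lower semicontinuous and its maximum value $N$ is attained on a nonempty open subset $U$ of the codomain, every point of which has exactly $N$ inverse images. Generic injectivity is precisely the statement that $F$ has geometric degree $1$, i.e. the points of the codomain with exactly one inverse image form a dense subset $W$ of the image $F(\R^n)$. Since $U$ is a nonempty open subset contained in $F(\R^n)$, it must meet $W$, and at any $y \in U \cap W$ the fiber of $F$ has simultaneously $N$ and $1$ elements; hence $N = 1$. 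Therefore no fiber of $F$ contains more than one point, so $F$ is injective on all of $\R^n$.

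Next, $F$ is an everywhere defined rational map of $\R^n$ to itself that is injective, so the univalence result cited in Section~\ref{basic} (\cite{InjectiveReal}) applies and shows that $F$ is surjective, hence bijective; this is the one step in which the rational, everywhere defined character of $F$ is indispensable. Thus $F$ is invertible with a two-sided inverse $F^{-1}: \R^n \To \R^n$ defined on all of $\R^n$. To finish I would verify the regularity of that inverse. Each component of $F$ is a quotient of polynomials with nowhere vanishing denominator and is therefore real analytic on $\R^n$, while $j(F)$ vanishing nowhere makes $F$ a local diffeomorphism. A bijective local real analytic diffeomorphism has a real analytic inverse by the inverse function theorem applied at each point, so $F^{-1}$ is real analytic on $\R^n$; differentiating $F^{-1} \circ F = \mathrm{id}$ gives $j(F^{-1})(y) = 1/j(F)(F^{-1}(y))$, which is everywhere defined and nonzero, so $F^{-1}$ is itself nonsingular.

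I expect the first step to be the crux. For a general open map, generic injectivity does not force global injectivity; the argument succeeds here only because the maximum fiber size is attained on an open subset of the codomain, which is then compelled to intersect the generic (injective) locus, collapsing $N$ to $1$. Once $N = 1$ is secured, surjectivity is supplied by the quoted univalence theorem and the properties of the inverse are routine consequences of the inverse function theorem and the chain rule. Note that neither the oddness of the extension degree nor the triviality of the automorphism group from Proposition~\ref{needs} is needed for this argument, although they are consistent with it.
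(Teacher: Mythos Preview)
Your proof is correct and follows the same overall strategy as the paper: show the maximum fiber size $N$ equals $1$ by exploiting that the locus of maximum fiber size is open, then invoke \cite{InjectiveReal} for surjectivity and the inverse function theorem for the regularity of $F^{-1}$.

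There is one point worth flagging. You read ``generically injective'' as ``geometric degree $1$'', i.e.\ the set $W$ of codomain points with a single preimage is dense in $F(\R^n)$; under that reading your density argument is immediate. The paper instead interprets the hypothesis as ``$F$ is injective on a nonempty Zariski open subset $U$ of the \emph{domain}'', which is a priori weaker for the purpose at hand: a point of $F(U)$ could still have extra preimages lying in $V=\R^n\setminus U$. The paper closes this gap with a short dimension argument---$\dim V<n$ forces $\dim F(V)<n$, so the open max-fiber locus contains a point with no preimages in $V$, hence exactly one preimage. Your argument implicitly absorbs this step into the interpretation of the hypothesis. Either reading is defensible, but if you want your proof to match the paper's stated hypothesis you should insert that one-line dimension observation before asserting that $W$ is dense.
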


\begin{proof}
Suppose $F$ is injective on a nonempty Zariski open 
set $U \subset \R^n$.  
Let $V$ be the complement of $U$. 
Since $V$ is algebraic and $\dim V < n$, $F(V)$ 
is \sa of maximum dimension at most $n-1$. 
So $F(V)$ is not Zariski dense and therefore the open 
set of points of maximum fiber size $N$ contains a point 
with inverse images only in $U$. 
It follows that $N=1$, that $F$ is injective, and hence 
that $F$ is surjective 
\cite{InjectiveReal}. 
$F$ is locally real bianalytic, and so its global inverse is 
a nonsingular real analytic map. 
\end{proof}

Remark.
The asymptotic variety was defined by Ronen Peretz as the set of finite
limits of a map along curves that tend to infinity
\cite{asympvals,asymptotics}. 
For real \p maps, it can fail to be Zariski closed, and therefore not
technically a variety \cite{GeoPinMap}.
In that context it has been extensively studied by Zbigniew Jelonek
as the set of points at which a map is not proper
\cite{realtrans,geometry}. 
As one result, he shows that
for a nonconstant polynomial map $F: \R^n \To \R^m$, where $n$ and $m$
are any positive integers and no other conditions are imposed,
 the set $A(F)$ 
is $\R$-uniruled. By that he means that for any $a \in A(F)$ there is
a nonconstant polynomial map $g: \R \To \R^m$ (a polynomial curve) such 
that
$g(0)=a$ and $g(t) \in A(F)$ for all $t \in \R$. That in turn
implies that every connected component of $A(F)$ is unbounded
and has positive dimension. 
These results do not hold for everywhere defined rational maps, 
as shown by $y=1/(1+x^2)$, which is proper except at $y=0$.

\section{The birational and {G}alois cases}\label{bi+gal}

\begin{Thm}
Let $F: \R^n \To \R^n$ be a birational nonsingular map. 
Then $F$ has a global inverse, which is also 
a birational nonsingular map. 
\end{Thm}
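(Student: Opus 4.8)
The plan is to reduce to the preceding Proposition and then upgrade its real analytic inverse to an everywhere defined rational one. Since $F$ is birational its extension degree is $d=1$, so $\R(F)=\R(X)$; as $j(F)\not\equiv 0$, $F$ is dominant, a generic fiber is nonempty of size at most $d=1$, and hence $F$ is generically injective. By the preceding Proposition $F$ is a diffeomorphism of $\R^n$ onto $\R^n$ whose inverse $H$ is a nonsingular real analytic map. Moreover $\R(F)=\R(X)$ furnishes a birational inverse $G=(G_1,\dots,G_n)$, a rational map with $F\circ G=\mathrm{id}$ and $G\circ F=\mathrm{id}$ as rational maps, and $G$ agrees with $H$ on the dense open set where $G$ is regular. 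It remains to show that $G$ is everywhere defined and nonsingular, and that it is birational.

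Write each component in lowest terms as $G_i=p_i/q_i$ with $\gcd(p_i,q_i)=1$ in the polynomial ring. Since $G_i=H_i$ on a dense set and both $q_iH_i$ and $p_i$ are continuous, the identity $p_i=q_iH_i$ holds on all of $\R^n$, with $H_i$ real analytic. The heart of the matter is to prove that $q_i$ vanishes nowhere on $\R^n$; granting this, $G_i=p_i/q_i$ is the required everywhere defined representation and $G=H$ throughout.

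The main obstacle is exactly this everywhere-definedness, and it is where the real analyticity of $H$ must be used in an essential way: continuity alone does not suffice, since $(x^4+y^4)/(x^2+y^2)$ is a rational function in lowest terms whose denominator vanishes at the origin yet which extends continuously to $\R^2$. I would argue at a putative real zero $y_0$ of $q_i$. Passing to the local ring of real analytic germs at $y_0$, the relation $p_i=q_iH_i$ with $H_i$ a germ shows that $q_i$ divides $p_i$ there; hence any irreducible polynomial factor $\ell$ of $q_i$ with $\ell(y_0)=0$ divides $p_i$ in that ring. Because the analytic local ring embeds in the formal power series ring $\R[[y-y_0]]$, and the completion of the algebraic local ring $\R[y]_{\mathfrak m_{y_0}}$ is faithfully flat, divisibility of the polynomial $p_i$ by the prime $\ell$ descends from the completion back to $\R[y]$. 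Thus $\ell\mid p_i$ in $\R[y]$, contradicting $\gcd(p_i,q_i)=1$. Therefore $q_i$ has no real zero. (When the real zero set of $\ell$ is a hypersurface one could instead argue directly that $p_i$ vanishes on it; the formal argument is needed precisely in the hard case, exemplified by $x^2+y^2$, where that zero set is small, which is also the case in which analyticity rules out the continuous pathology above.)

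Finally, differentiating $F\circ G=\mathrm{id}$ gives $j(F)(G(y))\,j(G)(y)=1$, so $j(G)=1/(j(F)\circ G)$ is everywhere defined and nowhere zero because $j(F)$ is; hence $G$ is nonsingular. Being a two-sided rational inverse of the birational map $F$, the map $G$ induces the inverse isomorphism of function fields and so is itself birational. Thus the inverse of $F$ is an everywhere defined, birational, nonsingular map, as asserted.
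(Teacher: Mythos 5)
Your proposal is correct, and its overall skeleton matches the paper's: degree $1$ bounds fiber sizes, giving injectivity and hence (by Bia{\l}ynicki-Birula--Rosenlicht, or equivalently via the preceding Proposition) a real analytic global inverse that agrees with the rational inverse generically; the whole weight then falls on showing that each component's denominator, in lowest terms, vanishes nowhere on $\R^n$. Where you genuinely diverge is in how you derive the contradiction with coprimality at a putative real zero $y_0$ of $q_i$. The paper complexifies: it extends the analytic germ to a complex neighborhood $U$ of the zero, picks an irreducible complex factor $c$ of the denominator vanishing there, concludes that the numerator vanishes on the nonempty open set $U\cap\{c=0\}$ and hence on the whole irreducible complex hypersurface $\{c=0\}$, so $c$ divides the numerator in $\C[y]$, and then uses complex conjugation and unique factorization to descend the common factor to $\R[y]$. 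You instead stay entirely over $\R$ and argue through commutative algebra: the relation $p_i=q_iH_i$ in the ring of analytic germs gives $\ell\mid p_i$ in $\R[[y-y_0]]$ for an irreducible real factor $\ell$ of $q_i$ with $\ell(y_0)=0$, and faithful flatness of the completion $\R[y]_{\mathfrak m_{y_0}}\to\R[[y-y_0]]$ (i.e.\ $\ell\hat A\cap A=\ell A$) pulls this back to $\ell\mid p_i$ in the local ring and then, since $\ell$ is prime and does not divide the inverted denominators, in $\R[y]$ itself. Both arguments are sound; yours correctly sidesteps the trap that $\ell$ need not remain irreducible in the analytic or formal local ring, since faithful flatness descends divisibility without any primality in the completion. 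The paper's route is more geometric and elementary (complex hypersurfaces and conjugation), while yours is purely real and makes explicit exactly why analyticity, as opposed to mere continuity, is what excludes examples like $(x^4+y^4)/(x^2+y^2)$. Your closing steps (nonsingularity of the inverse via the chain rule, birationality via the inverse field isomorphism) coincide with the paper's brief concluding remarks.
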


\begin{proof}
$\R(F)=\R(X)$, so 
the extension degree is $1$. As it bounds the size of all fibers, 
 $F$ is injective, hence invertible.
Thus the rational inverse of $F$ extends to a real analytic map 
on all of $\R^n$. 
Let $g=a/b$ be a component of the inverse, where $a$ 
and $b$ are polynomials with no nonconstant common factor and
suppose $b(x)=0$ for some $x \in \R^n$. 
Let $U$ be an open neighborhood of $x$ in $\C^n$, 
such that $g$ extends to a complex analytic function 
$\tilde{g}$ on $U$ satisfying $b\tilde{g}=A$.
Let $c$ be an irreducible complex polynomial factor
of  $b$ satisfying $c(x)=0$. 
Then $a$ vanishes on the irreducible  hypersurface
$c=0$ in $\C^n$, because it does so in $U$. 
So $c$ is also an irreducible factor of $a$. 
Using complex conjugation, it follows easily that $a^2$ and 
$b^2$ have a nonconstant common factor in the real 
polynomial ring. 
But then so do $a$ and $b$, by unique factorization. 
This contradiction shows that $b$ vanishes nowhere. 
So all components of the inverse are everywhere defined 
rational functions. That makes the inverse an everywhere 
defined rational map, and it is clearly nonsingular and birational. 
\end{proof}

If $F$ is defined over a subfield $k \subset \R$, then so is 
its inverse, since extension degree is preserved by a faithfully 
flat extension of the coefficients. In that case, 
$F$ induces a birational bijection of $k^n$ onto $k^n$. 
Note that $y=x+x^3$ is polynomial, nonsingular, invertible, 
and defined over $\Q$, but the induced map from $\Q$ to
$\Q$ is not surjective.

Remark. In \cite{PolynomialRational}, \p maps $F:\R^n \To \R^n$ 
that map $\R^n$ bijectively onto $\R^n$ are considered, and the question is raised 
of when the inverse is rational. If so, the inverse is everywhere 
defined on $\R^n$ and $F$ is called a polynomial-rational bijection (PRB) of $\R^n$. 
A key technical result is that a \p bijection is a PRB if its 
natural extension to a \p map $\C^n \To \C^n$ maps only 
real points to real points. A PRB $F$ has a nowhere vanishing \J determinant $j(F)$. Conversely, it is shown that a nowhere 
vanishing $j(F)$ alone suffices to establish that a \p map 
$F:\R^n \To \R^n$ of degree two is a bijection and a PRB. 
A related but stronger condition is defined and shown to be sufficient, but not necessary, for \p maps of degree greater than two.

\begin{Thm}
If $F: \R^n \To \R^n$ is a rational nonsingular map and 
$\R(X)/\R(F)$ is a {G}alois extension, then $F$ is 
invertible if, and only if, $F$ is birational. 
\end{Thm}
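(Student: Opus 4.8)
The plan is to treat the two implications of the equivalence separately. The forward direction, that birationality implies invertibility, requires no new argument: it is exactly the conclusion of the preceding Theorem on birational nonsingular maps, so I would simply cite it.

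For the converse, assume $F$ is invertible. Then $F$ is a bijection of $\R^n$ onto $\R^n$, so every fiber is a single point; in particular $F$ is generically injective and its geometric degree equals $1$. Proposition~\ref{needs} now applies and yields that the automorphism group of $\R(X)/\R(F)$ is trivial. At this stage I would bring in the Galois hypothesis through the defining numerical property of Galois extensions: since the characteristic is zero the extension is automatically separable, so being Galois is equivalent to being normal, and then the order of the automorphism group coincides with the extension degree $d$. A trivial automorphism group therefore forces $d=1$, that is $\R(X)=\R(F)$, which is precisely the statement that $F$ is birational.

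The argument is brief because its substance has already been isolated in Proposition~\ref{needs}; the remaining steps are formal. The one place deserving a moment's care is the passage from invertibility to geometric degree $1$: I would make explicit that invertibility includes surjectivity, so that every fiber---not merely the generic one---reduces to a point, and hence the generic fiber is nonempty of size exactly one. Once that is noted, the conclusion follows directly from the equality $|\mathrm{Aut}(\R(X)/\R(F))| = [\R(X):\R(F)]$ valid for Galois extensions, and no deeper obstacle remains.
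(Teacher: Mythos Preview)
Your proposal is correct and follows essentially the same route as the paper: for one implication you cite the birational theorem, and for the other you pass from invertibility to geometric degree~$1$, invoke Proposition~\ref{needs} to obtain a trivial automorphism group, and then use the Galois hypothesis to force $d=1$. The paper's proof is simply a terser version of the same argument.
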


\begin{proof}

If $F$ is invertible, then the extension has no 
nontrivial automorphisms. So it can be {G}alois only if 
it is of degree $1$. In that (birational) case $F$ does
have an inverse. 
\end{proof}
If $F$ is defined over a subfield $k \subset \R$ and 
$k(X)/k(F)$ is {G}alois, then so is $\R(X)/\R(F)$.

Remark. 
The {G}alois case of the standard JC states that 
a \p Keller map with a {G}alois field extension  
has a polynomial inverse. It was first proved for $k=\C$ only \cite
{GaloisCase}, using methods of the theory of several complex variables. The general characteristic zero case appears
 in \cite{Razar} and, independently, in
\cite{AlgebraicGaloisCase} .
The theorem above is manifestly weaker. 
 Of course, the existence of 
a polynomial inverse implies the triviality of the field extension , so the JC theorem has 
no concrete examples.  

In contrast, in the SRJC and RRJC contexts, the existence of 
an inverse does not imply the field extension  is {G}alois, much less birational. 
For instance, if $y=x+x^3$, the field extension  $\R(y) \subset \R(x)$ 
is neither. 
Even so, a {G}alois extension of degree $d \ne 1$ would represent 
a counterexample to the RRJC of a new, and 
unexpected, type.

\section{Promoted SRJC cases}\label{duo}

The two theorems below have been proved in the SRJC context and,
because of their topological character, they generalize to the RRJC
context almost effortlessly. In both theorems,
let $F: \R^n \To \R^n$ be a rational nonsingular map.
The theorems impose conditions on $A(F)$ that 
are illusory, in that  they conclude
that $F$ is invertible, and so $A(F)$ is actually empty.
For \p $F$, the first theorem was proved by Zbigniew Jelonek
\cite[Theorem 8.2]{geometry}
and the second by
 Christopher I. Byrnes and Anders Lindquist
\cite[Remark 2]{NewProperness}.

\begin{Thm}
If the dimension of $A(F)$ is less than $n-2$,
then $F$ is invertible.
\end{Thm}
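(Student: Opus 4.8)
The plan is to combine the covering--map picture developed in Section~\ref{basic} with the fact that deleting a set of codimension at least three from $\R^n$ preserves simple connectivity. Because $F$ is nonsingular it is a local diffeomorphism, hence locally dimension preserving, so $F^{-1}(A(F))$ is a closed semialgebraic set with $\dim F^{-1}(A(F)) \le \dim A(F) < n-2$. First I would record two standard topological facts for a closed semialgebraic $S \subset \R^n$: if $\operatorname{codim} S \ge 2$ then $\R^n \setminus S$ is connected, and if $\operatorname{codim} S \ge 3$ then $\R^n \setminus S$ is simply connected, both proved by sweeping a path, loop, or spanning disk off $S$ by general position in the semialgebraic category. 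Applying these, $U = \R^n \setminus A(F)$ is connected and simply connected, while $W = \R^n \setminus F^{-1}(A(F)) = F^{-1}(U)$ is connected.

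Next I would pin down the fiber count over $U$. The set $U$ is exactly the locus where $F$ is proper, and the number of inverse images is constant on it since it is connected; call this constant $N$. It equals the maximum fiber size, because every point realizing that maximum lies in the proper locus, and it is positive because the open set $F(\R^n)$ meets the dense set $U$. By the analysis in Section~\ref{basic}, since $W$ is the single connected component of $F^{-1}(U)$ and $U$ meets $F(\R^n)$, the induced map $F \colon W \to U$ is a proper local homeomorphism of connected, locally compact, locally arcwise connected spaces, hence an $N$--sheeted covering map (and $U \subseteq F(\R^n)$).

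The decisive step is then purely topological: a connected covering of a simply connected base is a homeomorphism. Since $W$ is connected and $U$ is simply connected, $F \colon W \to U$ is one sheeted, so $N = 1$. As $N$ is the maximum fiber size over the whole domain, every fiber of $F$ contains at most one point, so $F$ is injective. By \cite{InjectiveReal} it is therefore bijective, and being locally real bianalytic it has a nonsingular real analytic inverse; equivalently one may quote the generic--injectivity Proposition of Section~\ref{basic}. In particular $A(F) = \emptyset$, as promised.

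The main obstacle, I expect, is not the covering theory but the passage from a codimension bound to the connectivity statements in the semialgebraic setting, and in particular justifying simple connectivity of $U$. It is exactly the gap between codimension two and three that dictates the hypothesis $\dim A(F) < n-2$ rather than $\le n-2$: at codimension two the base $U$ need only be connected, a genuinely multisheeted covering could survive, and the conclusion $N=1$ would fail.
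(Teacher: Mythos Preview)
Your argument is correct and follows essentially the same route as the paper: both invoke the lemma that the complement of a closed semialgebraic set of codimension at least three in $\R^n$ is simply connected, apply it to $A(F)$ and to $B(F)=F^{-1}(A(F))$, deduce that the proper covering $B(F)^c\to A(F)^c$ is a homeomorphism, and conclude injectivity (hence invertibility) of $F$. The only cosmetic difference is that the paper quotes simple connectivity for both base and total space, whereas you economize by using only connectedness of $W$ together with simple connectivity of $U$; either way the covering is one-sheeted.
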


\begin{proof}
If $A \subset \R^n$ is a closed \sa set and $\dim A < n-2$,
then $A^c= \R^n \setminus A$ is simply connected
\cite[Lemma 8.1]{geometry}.
This applies to both $A(F)$ and to $B(F)=F^{-1}(A(F))$,
which satisfies $\dim B(F) = \dim  A(F) \cap F(R^n)$. 
The induced map from $B(F)^c$ to $A(F)^c$ is proper,
hence a covering map, and therefore a homeomorphism.
Since  $B(F)$ is not Zariski dense, $F$ is generically injective,
and so invertible.
This proof is that of Jelonek, which simply applies to
rational maps as well.
\end{proof}

\begin{Thm}
If $A(F) \cap F(\R^n) = \emptyset$, then $F$ is invertible.
\end{Thm}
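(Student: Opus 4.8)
The plan is to realize $F$ as a covering map onto its image and then pin down the number of sheets. First, the hypothesis $A(F)\cap F(\R^n)=\emptyset$ says exactly that $F(\R^n)\subseteq A(F)^c$, so that $F^{-1}(A(F)^c)=\R^n$, and that $F$ is proper at every point of $A(F)^c$ (those being precisely the points where the fiber count is locally constant). Since properness is a local condition on the base, the induced map $F:\R^n\To A(F)^c$ is proper. As $F$ is an open map its image is open, and a proper map into a locally compact Hausdorff space is closed, so $F(\R^n)$ is clopen in $A(F)^c$, hence a union of connected components of $A(F)^c$. Because $\R^n$ is connected while $\R^n=F^{-1}(F(\R^n))$ decomposes as the disjoint union of the open sets $F^{-1}(\text{component})$, only one component can occur: $F(\R^n)=U$ is a single connected component, and $F:\R^n\To U$ is a proper local diffeomorphism of connected, locally compact, locally arcwise connected spaces, hence a covering map of some finite degree $N\le d$.

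The crux is to show $N=1$. Since $\R^n$ is simply connected it is the universal cover of $U$, so $N=|\pi_1(U)|$; this is where the present theorem diverges from the preceding one, which forced $U$ to be simply connected outright through the bound $\dim A(F)<n-2$. Here $U$ may have nontrivial fundamental group, so I would instead run a degree count on Euler characteristics. All the data are semialgebraic, so $U$ carries a well-defined semialgebraic Euler characteristic $\chi$, and by semialgebraic (Hardt) triviality the covering $F:\R^n\To U$ is definably locally trivial with discrete $N$-point fiber; additivity of $\chi$ over a finite partition of $U$ trivializing $F$, together with $\chi(\text{fiber})=N$, yields $\chi(\R^n)=N\,\chi(U)$. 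Since $\chi(\R^n)=(-1)^n$ is a unit, $N$ must divide $\pm1$, forcing $N=1$.

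Once $N=1$, the map $F:\R^n\To U$ is a homeomorphism, so $F$ is injective; the theorem that univalence implies bijectivity for everywhere defined rational maps \cite{InjectiveReal} then makes $F$ bijective, and the earlier proposition on generic injectivity supplies the nonsingular real-analytic global inverse. In particular $U=\R^n$ and $A(F)=\emptyset$ a posteriori, confirming the remark that the hypothesis on $A(F)$ is illusory. I expect the genuine obstacle to be the middle step: without the dimension bound of the previous theorem one cannot invoke simple connectivity of the image, so the whole weight of the argument rests on extracting $N=1$ from the Euler-characteristic multiplicativity of the covering, which is what makes this a true generalization rather than a restatement.
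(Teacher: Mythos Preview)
Your proof is correct and takes a genuinely different route from the paper's. Both arguments begin the same way: the hypothesis makes $F:\R^n\to F(\R^n)$ a proper local diffeomorphism, hence a finite covering of degree $N$ with $\R^n$ as the (simply connected) total space, and the only issue is to force $N=1$. The paper invokes P.~A.~Smith theory: the deck group $\pi_1(F(\R^n))$ has order $N$, and if $N>1$ it contains an element of prime order, yielding a fixed-point-free periodic homeomorphism of $\R^n$, which Smith theory forbids. You instead exploit the semialgebraic structure: Hardt triviality lets you compute $\chi(\R^n)=N\cdot\chi(U)$ for the additive (compactly supported) semialgebraic Euler characteristic, and since $\chi(\R^n)=(-1)^n$ is a unit in $\Z$, $N=1$ follows. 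Your argument is arguably more elementary---it replaces a fairly deep topological theorem with a bookkeeping identity---but it leans essentially on tameness (the map and the base must be semialgebraic, or at least definable in an o-minimal structure, for Hardt triviality and the additive $\chi$ to be available). The Smith-theory argument, by contrast, needs only that the covering be finite-sheeted with contractible Euclidean total space, so it would survive in a purely real-analytic setting once finiteness of $N$ is secured by other means. In the present rational context either approach is perfectly adequate.
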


\begin{proof}
The condition states that every point of the (connected, open)
image of $F$ is a point at which $F$ is proper. Equivalently, the
induced map $\R^n \To F(\R^n)$ is proper.
The main result of \cite{NewProperness} is that the standard complex 
JC holds for polynomial maps that are proper as maps onto their image. 
In Remark 2 at the end of the note, that result is also proved 
in the SRJC context. Briefly, $\R^n$ is a 
universal covering space, of finite degree $d$, of $F(\R^n)$. 
By well known results of the branch of topology called P. A. Smith 
theory, there are no fixed point free homeomorphisms of $\R^n$ 
onto itself of prime period. But the fundamental group 
$\pi_1 (F(\R^n))$ is of order $d$, and contains an element of 
prime period unless $d=1$. So $d=1$, $F$ is injective, and 
therefore invertible. The assumption that $F$ 
is polynomial, rather than just real analytic, is used at only two points in 
the proof. First, it ensures that the degree of the covering map is 
finite, and second, that injectivity implies invertibility. 
Rationality is 
sufficient in both situations, so 
this proof works in the RRJC context as well. 
\end{proof}

\end{document}